\newtheorem{theorem}{Theorem}
\newtheorem{lemma}[theorem]{Lemma}
\newtheorem*{cor}{Corollary}
\begin{document}

\title{The groups of fibred 2-knots}

\author{Jonathan A. Hillman }
\address{School of Mathematics and Statistics F07\\
     University of Sydney, Sydney\\ 
     NSW 2006, Australia }

\email{jonh@maths.usyd.edu.au}

\begin{abstract}
We explore algebraic characterizations of 2-knots whose 
associated knot manifolds fibre over lower-dimensional orbifolds, 
and consider also some issues related to the groups of 
higher-dimensional fibred knots.
\end{abstract}

\keywords{cohomological dimension. fibred knot. finitely presentable.
$n$-knot group. orbifold bundle.}

\subjclass{57Q45}

\maketitle
Nontrivial classical knot groups have cohomological dimension 2,
and the knot is fibred if and only if the commutator subgroup 
is finitely generated, 
in which case the commutator subgroup is free of even rank.
Poincar\'e duality and the condition $\chi(M(K))=0$ 
together impose subtle constraints on 2-knot groups which 
do not apply in higher dimensions.
In particular, if the commutator subgroup $\pi'$ 
of a 2-knot group $\pi$ is finitely generated then 
the virtual cohomological dimension of $\pi$ is 1, 2 or 4.
In this note we shall show that (modulo several plausible conjectures)
a 2-knot group $\pi$ is the group of a fibred 2-knot if and only if $\pi'$ 
is finitely generated, and if moreover $\pi$ is torsion-free
every 2-knot with group $\pi$ is $s$-concordant to a fibred 2-knot.
A simple satellite construction gives an example of a 2-knot 
whose group $\pi$ is not virtually torsion-free
(and so $\pi'$ is not finitely generated).

Although our main interest is in the case of 2-knots,
we give examples of fibred $n$-knots with groups of 
cohomological dimension $d$, for every $n\geq4$ and $d\geq1$.
(No purely algebraic characterization of the groups of fibred $n$-knots
is yet known for any $n$.)
It is not clear whether there are fibred 3-knots with such groups.
In the final section we consider other possible fibrations of 2-knot manifolds.

This work was prompted by reading \cite{GGS}, where it is 
shown that there is a high-dimensional knot group which contains 
copies of every finitely presentable group,
and it is suggested that there should be a similar 2-knot group.
Our results do not address the questions raised at
the end of \cite{GGS} beyond the observations that 
no examples supporting the suggestions made there
can be the groups of fibred 2-knots,
and very likely no such examples 
have finitely generated commutator subgroup.

\newpage
\section{fibred $1$-knots}

Let $\pi$ be the group of a fibred 1-knot $K$,
and let $t\in\pi$ represent a meridian of the knot.
Then $\pi'$ is free of finite rank $2g$, and the meridianal automorphism
$\phi$ determined by conjugation by $t$ is geometric:
it is induced by an orientation-preserving self-homeomorphism $f$
of $T_{g,o}$, the once-punctured surface with $g$ handles.

Conversely, let $f$ be an orientation-preserving self-homeomorphism 
of $T_{g,o}$ which fixes $\partial{T_{g,o}}$.
The mapping torus $M(f)$ has fundamental group
$\pi=F(2g)\rtimes_{f_*}Z$ and boundary a torus.
Let $h:S^1\times{S^1}\to\partial{M(f)}$ be a homeomorphism such that
$\mu=h(\{*\}\times{S^1})$ is a section of the projection to $S^1$.
If $\pi$ has weight 1 we may assume that $\mu$
represents a normal generator for $\pi$.
Then $\Sigma=M(f)\cup_h{S^1\times{D^2}}$ is a homotopy 3-sphere,
and $K=h|_{S^1\times\{0\}}$ is a fibred knot with exterior 
homeomorphic to $M(f)$.

Is there an algebraic characterization of 
such geometric outer automorphism classes?
The situation is simpler in higher dimensions.

\begin{lemma}
Let $\pi$ be a finitely presentable group with $\pi'$ 
finitely generated and $\pi/\pi'\cong{Z}$.
Then the following are equivalent
\begin{enumerate}
\item $\pi'\cong{F(r)}$ for some $r\geq0$;

\item $c.d.\pi\leq2$ and $\pi'$ is $FP_2$;

\item $\pi$ has deficiency $1$.
\end{enumerate}
If these conditions hold and $\pi$ has weight $1$
then it is the group of a fibred $n$-knot, for all $n\geq2$.
\end{lemma}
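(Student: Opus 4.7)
The plan is to establish the equivalences in the short cycle (1)$\Rightarrow$(2)$\Rightarrow$(1) and (1)$\Rightarrow$(3)$\Rightarrow$(1), and then realize the geometric conclusion separately. The main algebraic obstacle will be (3)$\Rightarrow$(1), where freeness of the commutator subgroup must be extracted from a deficiency-one presentation together with the finite generation hypothesis, with no geometric fibration to lean on.

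For (1)$\Rightarrow$(2), since $\pi = F(r)\rtimes_\phi Z$ acts freely and cellularly on the product of a tree (for $F(r)$) with a line (for $Z$), $c.d.\,\pi\le 2$, and free groups are of type $FP_\infty$, hence $FP_2$. For (2)$\Rightarrow$(1), I would invoke Bieri's theorem from his LMS notes \emph{Homological Dimension of Discrete Groups}: when $G$ is of type $FP$ with $c.d.\,G = n$ and $N \triangleleft G$ is $FP_{n-1}$ with $G/N \cong Z$, one has $c.d.\,N \le n-1$. Applied here, this gives $c.d.\,\pi' \le 1$, and combined with the Stallings--Swan theorem and the finite generation hypothesis, $\pi' \cong F(r)$ for some finite $r$.

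For (1)$\Rightarrow$(3), the HNN presentation $\langle x_1,\dots,x_r,t \mid tx_it^{-1} = \phi(x_i)\rangle$ has $r+1$ generators and $r$ relators, so $\mathrm{def}(\pi)\ge 1$, and the universal bound $\mathrm{def}(\pi)\le \beta_1(\pi)-\beta_2(\pi) \le 1$ gives equality. For (3)$\Rightarrow$(1), the hard implication, I would take a deficiency-one 2-complex $X$ for $\pi$. Then $\chi(X) = 0$, and since $H_1(X;Z) = Z$ and $H_2(X;Z)$ is free abelian, $H_2(X;Z) = 0$. The cellular chain complex of the infinite cyclic cover $\widetilde X$ has the form $\Lambda^r \to \Lambda^{r+1} \to \Lambda$ over $\Lambda = Z[t,t^{-1}]$, and I would combine this resolution with the assumption that $\pi_1(\widetilde X) = \pi'$ is finitely generated, via a Bieri--Strebel style argument (an algebraic analogue of Stallings' fibration theorem for 3-manifolds), to show $\widetilde X$ is homotopy equivalent to a finite wedge of circles, giving $\pi' \cong F(r)$.

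For the final sentence of the lemma, assume $\pi' \cong F(r)$ and $\pi$ has weight $1$; let $t$ be a weight element and $\phi \in \mathrm{Out}(F(r))$ the outer automorphism induced by conjugation with $t$. For each $n\ge 2$, realize $\phi$ by a self-homeomorphism $f$ of the punctured connected sum $V = \#^r(S^1\times S^n) \setminus \mathrm{int}\,B^{n+1}$ fixing $\partial V = S^n$ pointwise, using Laudenbach's theorem for $n = 2$ and analogous high-dimensional results for $n\ge 3$. The mapping torus $M(f)$ is an $(n+2)$-manifold with boundary $S^n\times S^1$; attach $S^n\times D^2$ along this boundary to obtain a closed $(n+2)$-manifold $\Sigma$. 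Van Kampen together with the weight-$1$ hypothesis gives $\pi_1(\Sigma) = 1$, and Mayer--Vietoris shows $H_*(\Sigma) = H_*(S^{n+2})$, so $\Sigma$ is a homotopy sphere. The embedded $n$-sphere $S^n\times\{0\} \subset S^n\times D^2 \subset \Sigma$ is then a fibred $n$-knot with group $\pi$ and exterior $M(f)$, as required.
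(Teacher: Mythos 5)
Most of your argument tracks the paper's proof closely: $(2)\Rightarrow(1)$ via Bieri's theorem (this is exactly the paper's citation of Corollary 8.6 of \cite{Bi}) together with Stallings--Swan, the easy implications $(1)\Rightarrow(2)$ and $(1)\Rightarrow(3)$, and the realization of the meridianal automorphism by a self-homeomorphism of $\#^r(S^1\times S^n)$ followed by surgery on the section of the mapping torus -- your ``puncture and glue in $S^n\times D^2$'' version is the same construction as the paper's ``cocore of surgery on the section.''

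The genuine gap is in $(3)\Rightarrow(1)$. What you are trying to prove there -- that a group of deficiency $1$ with $\pi/\pi'\cong Z$ and $\pi'$ finitely generated has $\pi'$ free -- is precisely the Rapaport Strasser Conjecture, which was open for decades and was proved by Kochloukova \cite{Ko}; the paper simply cites that result. Your sketch does not supply a proof. The chain-complex observation ($\chi(X)=0$, hence $H_2(X;\mathbb{Z})=0$, and $C_*(\widetilde X)$ has the form $\Lambda^r\to\Lambda^{r+1}\to\Lambda$) is correct but is only the starting point; the step ``combine this with finite generation of $\pi'$ via a Bieri--Strebel style argument to show $\widetilde X$ is homotopy equivalent to a finite wedge of circles'' is exactly the hard content and is not an available off-the-shelf analogue of Stallings' fibration theorem. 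Stallings' theorem depends essentially on $3$-manifold topology (the sphere theorem), and the Bieri--Neumann--Strebel machinery applied to a finitely presented group with finitely generated kernel of a map to $Z$ yields finite presentability statements about the kernel, not freeness; indeed there is no general implication ``deficiency $1$ and kernel finitely generated $\Rightarrow$ kernel of cohomological dimension $1$'' short of Kochloukova's theorem. You should either cite \cite{Ko} for this implication, as the paper does, or reduce $(3)$ to $(2)$ by an argument you have not given (note that a deficiency-$1$ presentation need not yield an aspherical $2$-complex, so $c.d.\pi\leq2$ does not follow formally from deficiency $1$).
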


\begin{proof}
It is easy to see that $(1)\Rightarrow(2)$ and (3).
Conversely, (2) and (3) each imply that $\pi'$ is free,
by Corollary 8.6 of \cite{Bi} and by the ``Rapaport Conjecture" \cite{Ko}, 
respectively.

If these conditions hold then $\pi\cong{F(r)\rtimes_\theta{Z}}$,
where $\theta$ is the automorphism induced by conjugation by 
a normal generator for $\pi$.
We may realize $\theta$ by a basepoint
preserving self-homeomorphism
$h$ of $\#^r(S^n\times{S^1})$, for every $n\geq2$.
If $\pi$ has weight 1 then the cocore of surgery on
the section of the mapping torus $M(h)$ determined by
the basepoint is a fibred $n$-knot with group $\pi$.
\end{proof}

Is it sufficient to assume that $c.d.\pi=2$ and $\pi'$ is finitely generated?

\section{fibred $2$-knots}

If $K$ is a 2-knot with $\pi'$ finitely presentable then
$M(K)'$ is a $PD_3$-complex \cite{HK},
and the indecomposable factors of
$\pi'$ are $PD_3$-groups or are virtually free \cite{Cr}.
Hence $v.c.d.\pi=1$, 2 or 4.
The main result of this section is contingent upon 
the following two ASSUMPTIONS:  
\begin{enumerate}
\item all $PD_3$-groups are 3-manifold groups; 
\item if the fundamental group of an indecomposable $PD_3$-complex
is virtually free and maps onto $T_1^*$ then it is finite.
\end{enumerate}

A finitely generated group $\nu$ has a decomposition
${(*_{i\in{I}}G_i)*F(s)}$, 
where the groups $G_i$ are indecomposable but not $Z$.
The non-free factors are unique up to permutation and conjugacy in $\nu$,
by the Grushko-Neumann Theorem and the Kurosh Subgroup Theorem.
Automorphisms of $\nu$ are generated by automorphisms of the factors,
permutations of isomorphic factors, 
conjugacy and ``generalized Whitehead moves",
corresponding to dragging a summand around a loop.
The latter two types of automorphism induce the identity 
on the abelianizations of the non-free factors $G_i$.
(This analysis of $Aut(\nu)$ is due to D.I.Fuchs-Rabinovitch. 
See \cite{Gi} for a more recent account.)

\begin{lemma}
Let $G=*_{i\in{Z/rZ}}G_i$, where the factors are isomorphic,
and let $f$ be an automorphism of $G$ such that $f(G_i)=G_{i+1}$, 
for all $i\in{Z/rZ}$.
Then $f$ is meridianal if and only if the restriction of
$f^r$ to $G_1$ is meridianal.
Similarly, $H_1(f)-1$ is an isomorphism of $H_1(G;\mathbb{Z})$
if and only if 
$H_1(f^r)-1$ is an isomorphism of $H_1(G_1;\mathbb{Z})$.
\end{lemma}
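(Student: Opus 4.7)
The plan is to identify both conditions with a single algebraic statement about the coinvariant quotient $G/N$, where $N=\langle\langle g^{-1}f(g):g\in G\rangle\rangle_G$. Meridianality of $f$ amounts to $G/N=1$, and $H_1(f)-1$ being surjective amounts to $(G/N)^{\mathrm{ab}}=0$; so the main step will be to show that $G/N$ is naturally isomorphic to $G_1/N_1$, where $N_1=\langle\langle h^{-1}f^r(h):h\in G_1\rangle\rangle_{G_1}$. The kernel side of (2) will be handled separately by a short linear-algebra computation.

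To construct the isomorphism, observe first that the composite $G_1\hookrightarrow G\twoheadrightarrow G/N$ kills $N_1$: for $h\in G_1$ the telescoping identity
\[h^{-1}f^r(h)=\prod_{i=1}^{r}f^{i-1}(h)^{-1}f^i(h)\]
exhibits $h^{-1}f^r(h)$ as a product of generators $k^{-1}f(k)$ with $k=f^{i-1}(h)\in G_i$, each of which lies in $N$. This yields $\Phi\colon G_1/N_1\to G/N$. In the reverse direction, using the universal property of the free product define $\Psi\colon G\to G_1/N_1$ by $\Psi(g)=\overline{f^{1-i}(g)}$ for $g\in G_i$ (where $f^{1-i}\colon G_i\to G_1$ is the isomorphism provided by iterating $f$ or $f^{-1}$). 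A direct check gives $\Psi\circ f=\Psi$, so $\Psi$ descends to $\bar\Psi\colon G/N\to G_1/N_1$. A routine calculation then shows $\Phi\bar\Psi$ and $\bar\Psi\Phi$ are both the identity, using that the relations in $G/N$ identify every $\pi(G_i)$ with $\pi(G_1)$, and in particular $G_1\twoheadrightarrow G/N$.

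Statement (1) follows immediately from $G/N\cong G_1/N_1$. For (2), abelianising the isomorphism gives $\mathrm{coker}(H_1(f)-1)\cong\mathrm{coker}(H_1(f^r|_{G_1})-1)$. For the kernels, identify $H_1(G_i;\mathbb Z)\cong A:=H_1(G_1;\mathbb Z)$ via iterated $H_1(f)$; then $H_1(f)$ on $H_1(G;\mathbb Z)=A^r$ takes the shape of the companion-type shift
\[(a_1,\ldots,a_r)\longmapsto(\phi(a_r),\,a_1,\ldots,a_{r-1}),\qquad\phi:=H_1(f^r|_{G_1}),\]
and any fixed tuple must be constant $(a,\ldots,a)$ with $\phi(a)=a$, giving $\ker(H_1(f)-1)\cong\ker(\phi-1)$.

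The step I expect to require the most care is verifying that $\Phi$ and $\bar\Psi$ are mutually inverse — in particular the surjectivity of $G_1\to G/N$, which hinges on the observation that the relations $g=f(g)$ in $G/N$ collapse the images of all the factors into a single copy of (a quotient of) $G_1$. Everything else is routine bookkeeping with the Fuchs-Rabinovitch description of free-product automorphisms and with the matrix of $H_1(f)$ in the block decomposition of $A^r$.
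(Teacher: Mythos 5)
Your proof is correct and follows essentially the same route as the paper: the paper's entire argument is the bare assertion that $G/\langle\langle g^{-1}f(g)\mid g\in G\rangle\rangle\cong G_1/\langle\langle g^{-1}f^r(g)\mid g\in G_1\rangle\rangle$ ``and similarly for the abelianization'', which is exactly the isomorphism you construct explicitly via $\Phi$ and $\bar\Psi$ (the telescoping identity and the collapsing of all factors onto $G_1$ modulo the relations $g=f(g)$). Your separate shift-matrix computation for $\ker(H_1(f)-1)$ supplies a detail the paper's ``similarly for the abelianization'' glosses over, since abelianizing the isomorphism only controls the cokernel; otherwise the two arguments coincide.
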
 

\begin{proof}
This is clear, since
\[G/\langle\langle
{g^{-1}f(g)\mid{g\in{G}}}
\rangle\rangle\cong
G_1/\langle\langle{g^{-1}f^r(g)\mid{g\in{G_1}}}\rangle\rangle,
\]
and similarly for the abelianization.
\end{proof}

The semidirect product $G\rtimes_f{Z}$ has a presentation
\[\langle{G_1,t}\mid {t^rgt^{-r}}=f^r(g)~\forall{g\in{G_1}}\rangle.\]
If $G_1=\pi_1(N)$ where $N$ is an $\mathbb{S}^3$-manifold
or an aspherical 3-manifold and $f^r$ is meridianal and 
is realizable by a self-homeomorphism of $N$
then $G\rtimes_f{Z}$ is the group of an indecomposable 
fibred 2-knot with fibre $\sharp_{i=1}^rN$.
In particular, if $(s,d)=1$ and $r\geq1$ then there is 
a fibred 2-knot with fibre $\sharp_{i=1}^rL(d,s)$ 
and group having presentation
\[\langle{a,t}\mid{a^d=1,}~t^rat^{-r}=a^s\rangle.\]
If $K$ is a fibred 2-knot such that $\pi'$ has no nontrivial free factor and
the meridianal automorphism is in the subgroup generated by 
factor automorphisms, permutations and conjugacy then
$K$ is a connected sum of indecomposable 2-knots with groups
of the type just described.
Thus such knots provide basic building blocks for fibred 2-knots.

\begin{theorem}
Let $\pi$ be a $2$-knot group. If the above assumptions hold 
the following are equivalent:
\begin{enumerate}
\item $\pi=\pi{K}$ where $K$ is fibred;
\item there is a closed orientable $3$-manifold $N$ and a
meridianal automorphism $\theta$ of $\nu=\pi_1(N)$ such that
$\theta_*c_{N*}([N])=c_{N*}([N])$ and $\pi\cong\nu\rtimes_\theta{Z}$;
\item $\pi'$ is finitely generated.
\end{enumerate}
\end{theorem}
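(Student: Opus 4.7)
The plan is to establish the cycle $(1) \Rightarrow (3) \Rightarrow (2) \Rightarrow (1)$.

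The implication $(1) \Rightarrow (3)$ is immediate: the exterior of a fibred 2-knot fibres over $S^1$ with fibre a compact 3-manifold $F$, so $\pi' \cong \pi_1(F)$ is finitely generated.

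For $(3) \Rightarrow (2)$ I would invoke the structural results quoted at the start of this section. With $\pi'$ finitely generated, \cite{HK} gives that $M(K)'$ is a $PD_3$-complex, and \cite{Cr} gives that each indecomposable non-$Z$ factor $G_i$ of $\pi'$ is either a $PD_3$-group or virtually free. Assumption~(1) realizes each $PD_3$-group factor as $\pi_1$ of a closed aspherical 3-manifold, and Assumption~(2) reduces the nontrivial virtually free indecomposable factors to finite groups, which are then fundamental groups of spherical space forms. Assembling these with a connected sum of copies of $S^1 \times S^2$ for the free part of $\pi'$, I obtain a closed orientable 3-manifold $N$ with $\pi_1(N) \cong \nu := \pi'$. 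Since $M(K)$ is a closed orientable 4-manifold, its generating covering translation acts orientation-preservingly on $M(K)'$, so the induced $\theta$ fixes the image of the fundamental class in $H_3(\pi'; Z)$; transporting this under any $PD_3$-equivalence $M(K)' \simeq N$ gives $\theta_* c_{N*}[N] = c_{N*}[N]$.

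For $(2) \Rightarrow (1)$ the first task is to realize $\theta$ by an orientation-preserving self-homeomorphism $f$ of $N$. By Fuchs-Rabinovitch this reduces to realizing factor automorphisms, permutations of isomorphic summands, conjugations, and generalized Whitehead moves, each of which has an evident geometric model once one knows realizability of outer automorphisms for the individual summand types (Mostow rigidity and Waldhausen's theorem for the aspherical factors; classical results for spherical space forms and $S^1\times S^2$). The condition $\theta_* c_{N*}[N] = c_{N*}[N]$ precludes the only orientation obstruction. Now restrict $f$ to $N^\circ := N \setminus \mathrm{int}(D^3)$, form the mapping torus $M(f|_{N^\circ})$, and glue in $S^2 \times D^2$ along its boundary $S^1 \times S^2$ to obtain a closed orientable 4-manifold $X$. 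A direct count gives $\chi(X) = 2$; the gluing kills a meridian which normally generates $\pi$ (since $\pi$ has weight $1$), so $\pi_1(X) = 1$; and Poincar\'e duality then forces $H_*(X) \cong H_*(S^4)$. Freedman's topological $4$-dimensional Poincar\'e theorem gives $X \cong_{\mathrm{TOP}} S^4$, and the core $S^2 \times \{0\}$ is a fibred $2$-knot with exterior $M(f|_{N^\circ})$ and group $\pi$.

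The main obstacle is the realization step in $(2) \Rightarrow (1)$: showing that every automorphism of a connected-sum 3-manifold group satisfying the orientation condition is induced by a self-homeomorphism. This rests on a case analysis of the prime summand types and explicit geometric implementation of the Fuchs-Rabinovitch moves; once this is in hand, the remaining steps are essentially Wang-sequence bookkeeping and a standard invocation of Freedman.
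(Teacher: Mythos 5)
Your overall architecture matches the paper's, and your closing construction in $(2)\Rightarrow(1)$ (mapping torus of $f|_{N^\circ}$ capped off with $S^2\times D^2$, the Euler characteristic count, Freedman) is the standard one. But there are two genuine gaps, both at points where the meridianal condition has to do real work and your write-up does not invoke it. In $(3)\Rightarrow(2)$, Assumption (2) does not by itself ``reduce the nontrivial virtually free indecomposable factors to finite groups'', and finite factors are not automatically spherical space form groups. What Assumption (2) together with \cite{Hi10} gives is that any \emph{infinite} virtually free indecomposable factor would have abelianization $Z/2Z$; such a factor is then excluded because, by Lemma 2, each orbit of factors under the meridianal automorphism must carry a homologically meridianal automorphism, and $H_1(f)-1$ is never invertible on $Z/2Z$. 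Likewise, a finite fundamental group of an indecomposable orientable $PD_3$-complex need only have $4$-periodic cohomology, and not every such group acts freely on $S^3$; it is again the existence of a meridianal automorphism, via Theorem 15.12 of \cite{Hi}, that pins the finite factors down to $\mathbb{S}^3$-manifold groups. Without Lemma 2 at this step you cannot build $N$.

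In $(2)\Rightarrow(1)$, the appeal to ``classical results for spherical space forms'' is where the argument actually breaks. A self-map of a lens space $L(d,q)$ inducing multiplication by $s$ on $Z/dZ$ has degree $\equiv s^2 \pmod{d}$, so a self-homotopy equivalence --- let alone a self-homeomorphism --- inducing a prescribed meridianal $s$ need not exist on a \emph{given} lens space; realizability of outer automorphisms fails for fixed spherical space forms in general. The paper's route is to realize $[\theta]$ first by a self-homotopy equivalence of $N$ \cite{Sw}, upgrade it to a homeomorphism via \cite{HL,La} when $\pi'$ has no finite cyclic factors, and, when it does, to \emph{replace} the lens space summands of $N$ by the carefully chosen ones from the construction following Lemma 2 --- which is legitimate because condition (2) constrains only $\pi_1(N)$, not $N$ itself. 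Your version, which realizes $\theta$ summand by summand on a fixed $N$, gets stuck exactly at the finite cyclic factors.
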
 

\begin{proof}
The implications $(1)\Rightarrow(2)$  and $(2)\Rightarrow(3)$ are clear.

Suppose that $\pi'$ is finitely generated.
Then $M'$ is a $PD_3$-space and $\pi'$ is $FP_2$ \cite{HK}.
Let $\pi'=(*_{i\in{I}}G_i)*F(s)$ be a factorization of $\pi'$
in which the factors $G_i$ are indecomposable but not free.
The arguments of Crisp and Turaev apply equally well here to show that
if $G_i$ has one end it is a $PD_3$-group, 
and otherwise it is virtually free.
By the above assumptions, the $PD_3$-group factors 
are the fundamental groups of aspherical closed 3-manifolds and 
the remaining non-free factors have abelianization $Z/2Z$ \cite{Hi10}.
The meridianal automorphism induces an automorphism of 
$H_1(\pi';\mathbb{Z})=(\oplus*_{i\in{I}}H_1(G_i;\mathbb{Z}))\oplus{Z^s}$ 
which acts on $\oplus*_{i\in{I}}H_1(G_i;\mathbb{Z})$
by automorphisms and permutations of the summands.
Thus the non-free factors must admit homologically meridianal automorphisms,
by Lemma 2.
In particular, the factors which are virtually free but not free must be finite,
and thus must be the groups of $\mathbb{S}^3$-manifolds, 
by Theorem 15.12 of \cite{Hi}.
Therefore $M'$ is homotopy equivalent to a closed orientable 3-manifold.
The covering transformation $t$ corresponding to the meridian is an 
orientation-preserving self-homotopy equivalence, inducing an
outer automorphism class $[\theta]$.
Thus $(3)\Rightarrow(2).$

If (2) holds then we may realize $[\theta]$ 
by a self-homotopy equivalence of $N$ \cite{Sw}.
If $\pi'$ has no finite cyclic factors this is homotopic 
to a self-homeomorphism of $N$ \cite{HL,La}.
However if $\pi'$ has finite cyclic factors then 
we may have to modify $N$.
If we choose the lens space summands carefully, 
as in the construction following Lemma 2, 
then we may again realize $[\theta]$ by a self-homeomorphism.
Thus $(2)\Rightarrow(1).$
\end{proof}

If $\pi=\pi\widetilde{K}$ and $\pi'$ is finitely generated
then $\widetilde{K}$ need not be fibred.
If $K$ is fibred and $\pi'\cong\pi_1(N)$ has finite cyclic factors
the fibre need not be $N$.
However if $N$ is aspherical no such examples are known,
and under very plausible $K$- and $L$-theoretic hypotheses we may
obtain a stronger result.

\begin{theorem}
Let $M$ be a closed $4$-manifold with $\chi(M)=0$ and such
that $\pi=\pi_1(M)\cong\nu\rtimes{Z}$, 
where $\nu=\pi_1(N)$ for some aspherical closed $3$-manifold $N$.
If the ring $\mathbb{Z}[\nu]$ is coherent and 
the assembly map from $H_*(\nu;\mathbb{L}_0)$ to $L_*(\nu)$ 
is an isomorphism
then $M$ is $s$-cobordant to
the mapping torus of a self-homeomorphism of $N$.
\end{theorem}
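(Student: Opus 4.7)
The plan is to realise $M$ as homotopy equivalent to the mapping torus of a self-homeomorphism of $N$, and then to use topological surgery to promote this homotopy equivalence to an $s$-cobordism.

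First I would identify the infinite cyclic cover. Since $\chi(M)=0$ and $\pi'=\nu$ is finitely presentable, $M'$ is a $PD_3$-complex by \cite{HK}, with $\pi_1(M')=\nu$. Because $N$ is aspherical, $\nu$ is a $PD_3$-group, and any $PD_3$-complex with aspherical fundamental group is homotopy equivalent to the associated $K(\pi,1)$; hence $M'\simeq N$. The covering translation determined by a lift $t\in\pi$ of a generator of $Z$ then becomes a self-homotopy equivalence of $N$ realising the conjugation automorphism $\theta\in\mathrm{Aut}(\nu)$. By Waldhausen in the Haken case, Mostow rigidity in the hyperbolic case, and more generally the geometrisation theorem, such a self-homotopy equivalence of an aspherical closed $3$-manifold is homotopic to a self-homeomorphism $f\colon N\to N$. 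Consequently $M$ is homotopy equivalent to the mapping torus $M(f)$.

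Next I would upgrade the homotopy equivalence to an $s$-cobordism. Since $N$ is aspherical, so is the mapping torus $M(f)$, so $M(f)=K(\pi,1)$. I would run the topological surgery exact sequence
\[
L_5(\mathbb{Z}[\pi])\to S^s(M(f))\to [M(f),G/TOP]\to L_4(\mathbb{Z}[\pi]).
\]
In the Ranicki/Farrell--Jones framework, the structure set of an aspherical target is controlled by the assembly map $H_*(\pi;\mathbb{L}_0)\to L_*(\pi)$. The hypothesis that assembly is an isomorphism for $\nu$ propagates to $\pi=\nu\rtimes_\theta Z$ via a Wang/Shaneson-type splitting, and coherence of $\mathbb{Z}[\nu]$ both legitimises these algebraic manipulations and, combined with Bass--Heller--Swan, gives $\mathrm{Wh}(\pi)=0$ so that the $s$-decoration introduces no extra obstruction. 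Thus $S^s(M(f))$ is trivial, the class of the homotopy equivalence $M\to M(f)$ is zero, and $M$ is $s$-cobordant to $M(f)$.

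The main obstacle is the last step: bridging an $L$-theoretic assumption on $\nu$ to topological rigidity of the $4$-manifold $M(f)$. One must verify the Shaneson-style splitting for the twisted semidirect product $\nu\rtimes_\theta Z$, which is precisely where coherence of $\mathbb{Z}[\nu]$ earns its keep (controlling the Nil terms and reconciling $s$- and $h$-decorations), and one must check that the assembly map remains an isomorphism through the twist by $\theta$. Note that we never do $5$-dimensional topological surgery on a non-good fundamental group: only the algebraic surgery exact sequence is used, so the Freedman--Quinn restrictions do not obstruct the argument for the $s$-cobordism question.
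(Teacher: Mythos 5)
Your proposal is correct and follows essentially the same route as the paper: asphericity of $M$ gives $M'\simeq N$, geometrisation realises the meridianal homotopy equivalence by a homeomorphism, and then Waldhausen (via coherence) kills $Wh(\pi)$ while a Mayer--Vietoris/Shaneson comparison propagates the assembly isomorphism from $\nu$ to $\pi$, so that $L_5(\pi)$ acts trivially on the $s$-cobordism structure set. The only point worth making explicit is that before invoking the splitting you should record that $Wh(\nu)=\widetilde{K}_0(\mathbb{Z}[\nu])=0$ (from geometrisation and Farrell--Jones), since coherence alone only controls the Nil terms.
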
 

\begin{proof}
The manifold $M$ is also aspherical, 
and so the infinite cyclic covering space corresponding
to $\pi_1(N)$ is homotopy equivalent to $N$.
The generator of the covering group corresponds to
a self-homotopy equivalence of $N$. 
This is homotopic to a self-homeomorphism, 
and so $M$ is homotopy equivalent to a mapping torus $N\rtimes{S^1}$.

It follows from the Geometrization Conjecture 
and the work of Farrell and Jones that
$Wh(\nu)=\widetilde{K}_0(\mathbb{Z}[\nu])=0$.
If moreover the ring $\mathbb{Z}[\nu]$ is coherent then $Wh(\pi)=0$,
by the work of Waldhausen.
(It is well-known that 3-manifold {\it groups} are coherent.)

A comparison of Mayer-Vietoris sequences for the extension 
$\pi=\nu\rtimes{Z}$ shows that the assembly map from 
$H_*(\pi;\mathbb{L}_0)$ to $L_*(\pi)$ is also an isomorphism.
Since $M$ is aspherical it follows that $L_5(\pi)$ acts trivially 
on the $s$-cobordism structure set $S^s_{TOP}(M)$ (as defined in \S2
of Chapter 6 of \cite{Hi}), and so $M$ is $s$-cobordant to $N\rtimes{S^1}$.
\end{proof}

\begin{cor}
Let $K$ be a $2$-knot with group $\pi$ such that $\pi'\cong\pi_1(N)$,
where $N$ is an aspherical closed $3$-manifold.
If $\mathbb{Z}[\pi']$ is coherent and the assembly map is an isomorphism 
then $K$ is $s$-concordant to a fibred $2$-knot.
\qed
\end{cor}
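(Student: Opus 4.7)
The plan is to deduce the corollary by combining Theorems 3 and 4. Since $N$ is aspherical and closed, $\pi'\cong\pi_1(N)$ is finitely generated and torsion-free, so Theorem 3 applies: $\pi\cong\nu\rtimes_\theta Z$ with $\nu=\pi_1(N)$ and, in the absence of any finite cyclic factors, $\theta$ is realized directly by a self-homeomorphism $\phi$ of $N$. The mapping torus $N\rtimes_\phi S^1$ is then the knot manifold $M(K_1)$ of a fibred $2$-knot $K_1$ with group $\pi$: a weight element $t$ determines a section of the mapping torus fibration, and the cocore of surgery on this section is a fibred $2$-knot whose knot manifold recovers $N\rtimes_\phi S^1$.

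I next apply Theorem 4 to $M=M(K)$: this closed $4$-manifold has $\chi(M)=0$ and $\pi_1(M)=\pi\cong\nu\rtimes Z$, and the coherence and assembly hypotheses in the corollary are exactly those required. Theorem 4 produces an $s$-cobordism $W$ from $M(K)$ to a mapping torus $N\rtimes_{\phi'}S^1$, where $\phi'$ lies in the outer automorphism class of $\theta$, and hence of $\phi$. Mapping tori built from conjugate self-homeomorphisms of $N$ are homeomorphic, so after composing with such a homeomorphism one obtains an $s$-cobordism from $M(K)$ to $M(K_1)$.

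The final step promotes this $s$-cobordism of closed manifolds to an $s$-concordance of $2$-knots, and is the main obstacle. A $2$-knot is determined up to ambient isotopy by its knot manifold together with the isotopy class of the framed cocore $2$-sphere dual to a weight element, and this isotopy class depends only on the image of the weight element in $\pi/\pi'$, since any two weight elements in the same coset of $\pi'$ have ambiently isotopic cocore spheres. One therefore seeks to arrange $W$ to contain a properly embedded framed $S^2\times I$ interpolating between the two cocore spheres; deleting its open tubular neighbourhood converts $W$ into an $s$-cobordism of knot exteriors restricting to a product on the boundary $S^2\times S^1$, i.e.\ an $s$-concordance from $K$ to $K_1$. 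Producing this relative structure on $W$ is itself a surgery problem, but its obstructions lie in the same $L$-theory groups killed by the hypotheses of Theorem 4, and so vanish under the same assumptions.
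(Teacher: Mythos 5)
Your first two paragraphs follow exactly the route the paper intends: the corollary is stated with an immediate \qed because it is meant to follow directly from Theorem 4 applied to $M(K)$ (the realization of $\theta$ by a self-homeomorphism of $N$ is already part of the proof of Theorem 4, so the separate appeal to Theorem 3 is harmless but redundant). The one substantive thing the paper leaves implicit, and which you correctly identify as the crux, is the passage from an $s$-cobordism of knot manifolds to an $s$-concordance of knots.

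Your treatment of that final step is where the argument goes wrong. First, the cocore $2$-sphere of a $2$-knot does not live in $M(K)$: $M(K)$ is obtained from $S^4$ by surgery on $K$, and the knot is recovered from $M(K)$ by surgering a circle representing a weight element; the knot sphere is the cocore of \emph{that} surgery and lies in the resulting homotopy $4$-sphere, not in $M(K)$. So there is no ``framed cocore $2$-sphere'' in $\partial W$ for an $S^2\times I$ to interpolate between. Second, the claim that any two weight elements in the same coset of $\pi'$ yield ambiently isotopic cocore spheres is unjustified (distinct weight orbits can yield distinct knots with the same knot manifold); what one actually does is build the fibred knot $K_1$ by surgering the image in $N\rtimes_{\phi}S^1$ of the weight circle of $K$, so that no such comparison of weight elements is needed. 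Third, the correct mechanism is not a relative surgery problem with $L$-theoretic obstructions: one joins the two meridian circles in $\partial W$ by a properly embedded annulus, which exists by general position since $\dim W=5$ and the circles are freely homotopic in $W$ (the $s$-cobordism induces the identity on $\pi_1$ up to the given identifications), checks the framing, and then surgery on this annulus converts $W$ into an $s$-cobordism rel $\partial$ of the knot exteriors, i.e.\ an $s$-concordance. This is the standard lemma (see Chapter~17 of \cite{Hi}); as it stands your version of the step would not compile into a proof, although the overall strategy is the right one and the repair is routine.
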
 

Does this corollary extend to torsion-free 2-knot groups 
$\pi$ with $\pi'$ finitely generated?
It does not hold when $\pi'\cong{Z/3Z}$. 
(See \S17.5 of \cite{Hi}.)

\section{high dimensional fibred knots}

Although Kervaire's characterization of high-dimensional knot groups
was one of the first results of high-dimensional knot theory,
there is apparently no corresponding characterization of the groups 
of fibred $n$-knots, and it is not clear whether the class of
such groups should be independent of $n$ for $n$ large.

Let $E_n$ be the set of $n$-knot groups $\pi$ with $\pi'$ finitely presentable,
and let $F_n$ be the set of groups of fibred $n$-knots.
It is easy to see that $E_1\subset{E_2}\subset{E_3}=E_n$
and $F_n\subseteq{E_n}$, for all $n\geq1$.
Moreover $F_n\subseteq{F_{n+1}}$, for all $n\geq1$,
since spins and superspins of fibred knots are fibred and 
these constructions preserve the knot group.
In low dimensions some of these inclusions are proper:
$F_1=E_1\subsetneq{F_2}\subsetneq{F_3}$ and $F_3\not\subseteq{E_2}$
(so $E_1\not=E_2\not=E_3$), 
since twist spins of prime knots are fibred but do not have free
commutator subgroup, and
there are fibred 3-knots with closed fibre the 4-torus \cite{CS},
whereas solvable 2-knot groups have Hirsch length 1, 2 or 4.

Theorem 2 suggests that $E_2$ and $F_2$ should agree.
We have the following weaker result.
If $K$ is a 2-knot such that $\pi=\pi{K}\in{E_2}$ then 
$M(K)'$ is a $PD_3$-complex, and so is finitely dominated \cite{HK}.
Hence $X(K)'$ is finitely dominated, 
and therefore so also are $M(\sigma^p{K})'$ and $M(S^p\otimes{K})'$,
where $\sigma^pK$ and $S^p\otimes{K}$ are the iterated spin 
and the $p$-superspin of $K$, respectively.
If $Wh(\pi)=0$ and $p\geq2$ these are fibred $(p+2)$-knots,
by the Farrell fibration theorem,
and so $\pi$ is the group of a fibred $n$-knot, for all $n\geq4$.

If $F_3=E_3$ then $F_n=E_3$ for all $n\geq3$. 
If not, there are a variety of weaker questions.
The fact that there are homology 5-sphere groups 
which are not homology 4-sphere groups
suggests to me that perhaps $F_3\not=F_4$.
On the other hand, I expect that $F_4=E_3$ and so $F_n=E_n=E_3$ for all
$n\geq4$.

There are high-dimensional knot groups $\pi$ with $\pi'$
finitely generated but not finitely presentable \cite{Si}.
It is not known whether there are any such 2-knot groups.

\section{cohomological dimension}

An $n$-knot group has cohomological dimension 1 
if and only if it is infinite cyclic.
If $\pi$ is a knot group with $\pi'$ finitely presentable
and $c.d.\pi=2$ then $\pi'$ is free, by Lemma 1.
In particular, $\pi'\not=\pi''$.
Spinning repeatedly (or superspinning) a nontrivial classical fibred 
knot gives fibred $n$-knots $K$ with such groups, for all $n$.
We shall give examples to show that for every $d>2$ and $n\geq4$ 
there is a fibred $n$-knot with group $\pi$ such that $c.d.\pi=d$ 
and $\pi'=\pi''$.
As observed earlier, if $\pi$ is the group of a fibred 2-knot 
then $v.c.d.\pi=1$, 2 or 4.
The corresponding result for fibred 3-knots remains unknown.

Let $H$ be the Higman superperfect group,
with presentation
\[ \langle{a,b,c,d}\mid bab^{-1}=a^2,~cbc^{-1}=b^2,~ dcd^{-1}=c^2,~
ada^{-1}=d^2\rangle\]
and $B$ be the group of the Brieskorn homology 3-sphere $\Sigma(2,3,7)$,
with presentation
\[
\langle{x,y,z}\mid x^2=y^3=z^7=xyz\rangle.\]
Then $H^k\times{B}^l$ is a finitely presentable, 
superperfect group of cohomological dimension $2k+3l$,
and so is the fundamental group of an homology $m$-sphere,
for all $k, l\geq0$ and all $m\geq5$.
The groups $H$ and $B$ have deficiency 0 and so are also
the groups of homology 4-spheres.
If $t$ is a generator for the $Z$ factor then the normal closure of the image
of $(a,\dots,a,x,\dots,x,t)$ in $H^k\times{B}^l\times{Z}$
is the whole group, and so such products have weight 1.
If $N$ is an homology $m$-sphere with group $H^k\times{B}^l$ 
then the cocore of surgery on a loop in $N\times{S^1}$ 
representing a normal generator gives a fibred $(m-1)$-knot 
with group $H^k\times{B}^l\times{Z}$.
In particular, there is a fibred $n$-knot $K$ with $c.d.\pi{K}=d$ 
and $\pi'=\pi''$ for every $d\not=2$ and $n\geq4$.
This construction shows also that $B\times{Z}$ 
is the group of a fibred 2-knot; in fact ${B}\times{Z}\cong\pi\tau_73_1$.

These groups are also 3-knot groups, 
but it is not clear whether they are all realized by 
{\it fibred\/} 3-knots.
Is every cohomological dimension other than 2 is realized
by some fibred 3-knot with perfect commutator subgroup?
Since the above construction gives fibred 3-knots with 
groups $H\times{Z}$ and ${B}\times{Z}$ it would suffice to show 
that for each $d>4$ there is a finitely presentable perfect group $G$ 
with deficiency 0, $c.d.G=d-1$ and
with an element $g\in{G}$ such that $\{[g,h]\mid{h\in{G}}\}$ 
has normal closure the whole group.
(There are ``Cappell-Shaneson" fibred 3-knots with 
$\pi\cong{Z^4\rtimes{Z}}$.
These have $c.d.\pi=5$, but $\pi''=1$!)

The groups of fibred 2-knots have solvable word problem,
since they are extensions of $Z$ by 3-manifold groups.
There is a 3-knot $K$ whose group is universal,
in the sense that every finitely presentable group 
is a subgroup of $\pi{K}$,
by Corollary 3.4 of \cite{GGS}.
In particular, $\pi{K}$ has unsolvable word problem.
There is a finitely presentable acyclic group $U$ which is universal.
Since $U$ is the fundamental group of an homology 5-sphere
there is a fibred 4-knot in an homology 6-sphere whose
knot group has commutator subgroup $U$.
Is there a fibred 3-knot whose knot group is universal?

If we drop the condition that the commutator subgroup 
of a 2-knot group be finitely generated we get new examples.
The group $\pi$ with presentation
\[\langle{a,b,t}\mid{tat^{-1}=a^2,~aba^{-1}=b^{-1},~b^3=1}\rangle\]
is the group of a satellite \cite{Ka83} of Fox's Example 10 around $\tau_23_1$
The image of $a$ in any finite quotient of $\pi$ must have odd order,
and so the image of $b$ must be trivial.
Therefore $\pi$ is not virtually torsion-free, and $v.c.d.\pi=\infty$.
Are there any 2-knot groups $G$ for which $v.c.d.G$ is finite,
but not 1, 2 or 4?

\section{finitely generated normal subgroups}

In this section we shall consider algebraic criteria for
other possible fibrations for 2-knot manifolds.

Let $K$ be a 2-knot whose group $\pi=\pi{K}$ has 
an infinite finitely generated infinite normal subgroup $H$
of infinite index.
Then $\beta_1^{(2)}(\pi)=0$ and $\pi$ has one end.
The following results are immediate consequences of
Theorems 4 and 6 of \cite{Hi08}.
\begin{enumerate}
\item 
If $\pi/H$ has two ends then $H$ has finite index in $\pi'$,
and so $\pi'$ is also finitely generated.
Hence $\pi'$ and $H$ are the fundamental groups of $PD_3$-spaces,
and are $FP_2$ \cite{HK}.

\item If $\pi/H$ has one end and $H$ is $FP_2$ then $M(K)$ is aspherical
and either $H\cong{Z}$ and $H^3(\pi/H;\mathbb{Z}[\pi/H])\cong{Z}$
or $H$ is a $PD_2$-group and 
$\pi/H$ is virtually a $PD_2$-group.

\item
If $\pi/H$ has infinitely many ends and $H$ is $FP_3$ then
$H^2(\pi;\mathbb{Z}[\pi])$ $\not=0$ and $M(K)$ is not aspherical.
In particular, $H$ has more than one end.
\end{enumerate}
If $M(K)$ is the total space of an orbifold fibration with fibre $F$ 
then $H=\pi_1(F)$ is $FP_3$.
Nevertheless, can the finiteness hypotheses on $H$ in (2) and (3) be relaxed?
If we consider instead knots $K$ such that $\pi$ 
has an ascendant $PD_2$ subgroup $H$ then a transfinite induction 
using the Lyndon-Hochschild-Serre spectral sequence shows that 
$H^s(\pi;\mathbb{Z}[\pi])=0$ for $s\leq2$, so $M$ is aspherical,
and then the only new possibity is that 
$\pi$ be virtually $(H\rtimes{Z})\rtimes{Z}$.
See Theorems 5 and 6 of \cite{Hi08}.

Most of these possibilities do occur.
If $K=\tau_63_1$ (the 6-twist spun trefoil)
or $\tau_2k(e,\eta)$ (a 2-twist spun Montesinos knot) 
then $\zeta\pi'\cong{Z}$, $\zeta\pi\cong{Z}^2$ and $\pi'$
are finitely presentable normal subgroups,
and in each case the quotient has one or two ends.
If $K=\sigma3_1$ (the spun trefoil) then $\pi'$ is free of rank 2,
while $\pi/\zeta\pi$ is virtually free of rank 2.
If $K=\tau_44_1$ then $M(K)$ is the mapping torus 
of a self-homeomorphism of a non-Haken hyperbolic 3-manifold 
which has a 10-fold cover which fibers over the circle, 
with the fiber having genus 2\cite{Re}. 
Thus $\pi$ is virtually $(H\rtimes{Z})\rtimes{Z}$ with $H=\pi_1(T\#T)$.

If all hyperbolic 3-manifolds are virtually fibred then
this structure is generic for $r$-twist spins of simple non-torus knots.
Is there such a knot for which some twist spin is a mapping torus? 
(The Alexander polynomial $\Delta_k(t)$ must be divisible by 
the cyclotomic polynomial $\phi_r(t)$, 
so $r$ must be composite and $k$ must have at least 8 crossings.)

In more detail:
the considerations of \S2 above apply to 2-knots in case (1).

It is not known whether a finitely presentable group $G$ such that
$H^3(G;\mathbb{Z}[G])\cong{Z}$ must be virtually a $PD_3$-group.
However we have the following complement to case (2).

\begin{theorem}
Let $\pi$ be a $2$-knot group such that $\pi'$ is finitely generated
and with an infinite cyclic normal subgroup $H$
such that $\pi/H$ has one end.
Then $\pi/H$ is virtually a $PD_3$-group.
If moreover $H<\pi'$ then $M(K)$ is homotopy equivalent to the
mapping torus of a self-homeomorphism of an aspherical Seifert fibred
$3$-manifold.
\end{theorem}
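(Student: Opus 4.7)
The plan is to combine the asphericity statement (2) from \cite{Hi08} cited just above with the Seifert Fibered Space Theorem and the argument of Theorem 2.

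Since $\pi/H$ has one end and $H\cong Z$ is $FP_2$, statement (2) above gives that $M(K)$ is aspherical. Because $\pi'$ is finitely generated and $M(K)'$ is a $PD_3$-space by \cite{HK}, the cover $M(K)'$ is itself aspherical and $\pi'$ is a $PD_3$-group. The standing assumption that all $PD_3$-groups are 3-manifold groups gives $\pi'\cong\pi_1(N)$ for some closed aspherical orientable $3$-manifold $N$.

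Next I would analyse the position of $H$ in $\pi$. The restriction of the abelianization $\pi\twoheadrightarrow Z$ to $H\cong Z$ is either zero, in which case $H<\pi'$, or injective. In the injective case $H\cap\pi'=1$, so $\pi'$ embeds into $\pi/H$ with finite index $[\pi:H\pi']$, and $\pi/H$ is virtually the $PD_3$-group $\pi'$; this case is incompatible with the hypothesis $H<\pi'$ of the second assertion, so only the first conclusion need be verified in it.

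If $H<\pi'$, then $H$ is a normal infinite cyclic subgroup of the aspherical $3$-manifold group $\pi_1(N)$. The Seifert Fibered Space Theorem of Casson--Jungreis and Gabai then gives that $N$ is Seifert fibered over a $2$-orbifold $B$, and $\pi'/H$ is the orbifold fundamental group of $B$. As $\pi/H$ has one end and is an extension of $Z$ by $\pi'/H$, the subgroup $\pi'/H$ must be infinite; so $B$ is Euclidean or hyperbolic and $\pi'/H$ is virtually a closed orientable surface group. A characteristic torsion-free subgroup $\Sigma$ of finite index in $\pi'/H$, obtained for example as the intersection of all subgroups of a suitable bounded index, is a closed orientable surface group and is automatically normal in $\pi/H$. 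An appropriate finite-index subgroup of $\pi/H$ containing $\Sigma$ then has the form $\Sigma\rtimes Z$, which is the fundamental group of a closed orientable surface bundle over $S^1$ and hence a $PD_3$-group. Thus $\pi/H$ is virtually a $PD_3$-group in this case too.

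For the second assertion, still assuming $H<\pi'$, we have $\pi\cong\pi_1(N)\rtimes_\theta Z$ with $N$ aspherical and Seifert fibered. Following the proof of Theorem 2, the meridianal outer automorphism $[\theta]$ is realized by a self-homotopy equivalence of $N$ by \cite{Sw}, and since $N$ is aspherical and Seifert fibered this is homotopic to a self-homeomorphism by \cite{HL,La}. The mapping torus of this self-homeomorphism is a closed aspherical $4$-manifold with fundamental group $\pi$, and so is homotopy equivalent to the aspherical $M(K)$.

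The most delicate point is arranging the finite-index surface subgroup of $\pi'/H$ to be normal in $\pi/H$, which requires it to be preserved by the meridianal $Z$-action; choosing a characteristic subgroup of $\pi'/H$ suffices, using residual finiteness of $2$-orbifold groups.
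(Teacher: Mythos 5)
Your argument is essentially correct and follows the same skeleton as the paper's proof: asphericity of $M(K)$ via the finiteness conditions quoted from \cite{Hi08}, the fact that $\pi'$ is a $PD_3$-group by \cite{HK}, and the case division according to whether $H\cap\pi'=1$ or $H<\pi'$ (your ``abelianization is zero or injective on $H$'' dichotomy is the same split). The one substantive divergence is in the case $H<\pi'$. There you invoke the standing Assumption (1) of \S2 (all $PD_3$-groups are $3$-manifold groups) to get an aspherical closed $3$-manifold $N$ with $\pi_1(N)\cong\pi'$, and only then apply the Casson--Jungreis/Gabai Seifert Fibered Space Theorem to the infinite cyclic normal subgroup $H$. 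The paper instead observes that $H\leq\zeta\pi'$ and applies Bowditch's theorem \cite{Bow}, which says directly that a $PD_3$-\emph{group} with an infinite cyclic normal subgroup is the fundamental group of an aspherical Seifert fibred $3$-manifold. The difference matters: Theorem 5 is not stated as contingent on the assumptions of \S2, and the paper's route keeps it unconditional, whereas your route proves only a conditional version resting on an open conjecture. Apart from that, your fleshing out of the step ``$\pi'/H$ virtually a $PD_2$-group $\Rightarrow$ $\pi/H$ virtually a $PD_3$-group'' via a characteristic finite-index surface subgroup $\Sigma$ and the finite-index subgroup $\Sigma\rtimes Z$ is a correct expansion of what the paper merely asserts, and your treatment of the mapping-torus assertion (realizing the meridianal outer automorphism by a homotopy equivalence and then a homeomorphism of the Seifert fibred $N$) matches the mechanism of Theorem 3; just note that for aspherical Seifert fibred $3$-manifolds the homotopy-implies-homeomorphism step is due to Waldhausen and Scott rather than to the splitting results of \cite{HL,La}, which concern connected sums.
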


\begin{proof}
If $K$ is a 2-knot with group $\pi$ then $M(K)$ is aspherical,
and so $\pi$ is a $PD_4$-group.
Since $\pi'$ is finitely generated it is a $PD_3$-group \cite{HK}.

If $H\cap\pi'=1$ then $(\pi/H)'\cong\pi'$ and
$[\pi/H:(\pi/H)']=[\pi:H\pi']$ is finite,
so $\pi/H$ is virtually a $PD_3$-group.

If $H<\pi'$ then $H\leq\zeta\pi'$, 
and so $\pi'$ is the group of an aspherical Seifert fibred 3-manifold 
\cite{Bow}.
In particular, $\pi'/H$ is virtually a $PD_2$-group,
and so $\pi/H$ is again virtually a $PD_3$-group.
\end{proof}

The quotient $\pi/H$ need not be orientable,
even if it is a 3-manifold group. See \S15.3 of \cite{Hi}.
Does the lemma hold whenever $H\cong{Z}$ and $\pi/H$ has one end?
If $H\cong{Z}$ and $\pi/H$ is virtually a $PD_3$-group then $H$
has finite index in a maximal infinite cyclic normal subgroup.

The other possibility in case (2) is realized by orbifold bundle spaces.

\begin{theorem}
A group $\pi$ is the group of a $2$-knot $K$ such that $M(K)$
is the total space of an orbifold fibration with aspherical, 
$2$-dimensional base $B$ and fibre $F$ if and only if
it is a torsion-free extension of $\beta=\pi_1^{orb}(B)$ 
by $\phi=\pi_1(F)$, $\phi$ is a $PD_2^+$-group,
$\beta$ acts on $H_2(\phi;\mathbb{Z})$ through $w_1(\beta)$,
$\pi$ has weight $1$ and $\chi(F)\chi^{orb}(B)=0$.
If these conditions hold then 
$\pi/\pi'\phi\cong\beta/\beta'$ is finite cyclic, 
so $\phi\not\leq\pi'$, and $\zeta\pi\leq\phi$.
\end{theorem}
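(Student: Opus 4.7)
My plan is to handle the three parts of the theorem --- necessity of the algebraic conditions, sufficiency, and the concluding list of consequences --- separately. For necessity, the orbifold bundle $F \to M(K) \to B$ yields the exact sequence $1 \to \phi \to \pi \to \beta \to 1$. Both $B$ and $F$ are aspherical ($B$ by hypothesis and $F$ because $\phi$ is $PD_2^+$, hence $F$ is a torus or higher-genus closed orientable surface), so $M(K)$ is aspherical and $\pi$ is torsion-free. The orientability of $M(K)$ translates into the monodromy action of $\beta$ on the fibre orientation class in $H_2(\phi;\mathbb{Z}) \cong \mathbb{Z}$ factoring through $w_1(\beta)$. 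Multiplicativity of the orbifold Euler characteristic in the bundle, together with $\chi(M(K)) = 0$, gives the product condition, and $\pi$ inherits weight $1$ from being a $2$-knot group.

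For sufficiency, I would realize $\phi$ and $\beta$ geometrically as $\pi_1(F)$ and $\pi_1^{orb}(B)$ for a closed orientable surface $F$ and an aspherical $2$-orbifold $B$. The outer action $\beta \to Out(\phi)$ encoded in the extension is geometrically realized by self-homeomorphisms of $F$ via Dehn--Nielsen--Baer, with cone-point stabilizers acting through finite-order mapping classes that lift to genuine periodic homeomorphisms by Nielsen realization. Assembling the bundle then produces an aspherical orientable closed $4$-manifold $M$ with $\pi_1(M) \cong \pi$ and $\chi(M) = 0$; since $\pi$ is a $2$-knot group of weight $1$, a surgery argument in the spirit of Theorem~4 identifies $M$ with $M(K)$ for some $2$-knot $K$ carrying the induced orbifold bundle structure. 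The main obstacle lies here, in two stages: realizing the abstract extension by a genuine orbifold bundle requires careful cocycle-level control of the cone-point stabilizers acting on $\phi$, and the subsequent identification of the aspherical $4$-manifold with a $2$-knot manifold relies on the same surgery-theoretic framework (coherence and the $L$-theoretic assembly map) used in Theorem~4.

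For the concluding statements, $\pi/\pi'\phi \cong \beta/\beta'$ is immediate from the extension, and this quotient is cyclic because $\pi^{ab} \cong \mathbb{Z}$. To see it is finite --- equivalently $\phi \not\leq \pi'$ --- I would argue by contradiction: if $\phi \leq \pi'$ then $\beta^{ab} \cong \mathbb{Z}$, and the Euler-characteristic dichotomy $\chi(F)\chi^{orb}(B) = 0$ splits into the cases $F \cong T^2$ (so $\phi \cong \mathbb{Z}^2$) and $B$ a Euclidean orbifold, each of which conflicts with the weight-$1$ condition on $\pi$ together with the $w_1$-compatibility of the monodromy. Finally, $\zeta\pi$ projects to $\zeta\beta$, which is trivial when $\chi^{orb}(B) < 0$; in the flat case the weight-$1$ condition on $\pi$ forces the projection to be trivial as well, so $\zeta\pi \leq \phi$.
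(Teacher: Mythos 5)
Your account of necessity and the general shape of sufficiency (realize the extension by an orbifold bundle, then surger a normal generator) agrees with the paper, but two steps go astray. First, the passage from the orientable orbifold bundle space $M$ to a $2$-knot manifold does not need, and should not invoke, the $s$-cobordism machinery of Theorem 4: coherence of the group ring and the assembly-map isomorphism are extra hypotheses that this theorem does not assume, so importing them would silently weaken the statement. The paper's route is elementary: since $M$ is a closed orientable $4$-manifold with $\chi(M)=0$ and $\pi$ has weight $1$, surgery on a loop representing a normal generator yields a $1$-connected $4$-manifold with Euler characteristic $2$, hence a homotopy $4$-sphere, and the cocore of the surgery is a $2$-knot $K$ with $M(K)=M$ on the nose. (The realization of the abstract extension by an orbifold bundle, which you rightly flag as the delicate point, is simply quoted from Theorem 7.3 of \cite{Hi}; your Dehn--Nielsen--Baer/Nielsen-realization sketch is a plausible way to reprove it but is left incomplete.)

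Second, your derivation of the final assertions does not close. The finiteness of $\beta/\beta'\cong\pi/\pi'\phi$ has nothing to do with the Euler-characteristic dichotomy or the weight-$1$ condition: it follows from the single fact that no aspherical $2$-orbifold group has abelianization $Z$, combined with $\pi/\pi'\phi$ being a quotient of $\pi/\pi'\cong Z$ and hence cyclic. Your proposed contradiction in the case $\phi\leq\pi'$ (``$F\cong T^2$ \dots conflicts with the weight-$1$ condition'') is not substantiated --- there are $2$-knot groups containing normal $Z^2$ subgroups, e.g.\ $\pi\tau_63_1$, so the mere presence of a torus fibre is compatible with weight $1$ --- and in the flat-base case the contradiction you want is already immediate from the classification of flat $2$-orbifold groups, not from weight considerations. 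Likewise $\zeta\pi\leq\phi$ follows because the image of $\zeta\pi$ in $\beta$ lies in $\zeta\beta$, and a $2$-orbifold group with cyclic (hence, by the previous step, finite) abelianization has trivial centre; your appeal to ``the weight-$1$ condition forces the projection to be trivial'' in the flat case is an assertion rather than an argument.
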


\begin{proof}
The necessity of the algebraic conditions is clear.
If they hold then $\pi$ is the fundamental group of an
orientable orbifold bundle space $M$, by Theorem 7.3 of \cite{Hi}.
Since $M$ is orientable and $\chi(M)=0$ the cocore of
surgery on a normal generator for $\pi$ is a 2-knot
$K$ with $M(K)=M$.

The final assertions hold since
no 2-orbifold group has abelianization $Z$,
and the centre of a 2-orbifold group with cyclic abelianization is trivial.
\end{proof}

\begin{cor}
If $\chi(\beta)=0$ then $B=S^2(2,3,6)$ 
(the $2$-sphere with three cone points),
$\mathbb{D}^2(\overline{3},\overline{3},\overline{3})$
(the $2$-disc with reflector boundary and three corner points) 
or $\mathbb{D}^2(3,\overline{3})$
(the $2$-disc with reflector boundary, one corner point and one cone point). 
\end{cor}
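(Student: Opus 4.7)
The plan is to combine the classification of closed Euclidean $2$-orbifolds with the last assertion of Theorem 6. Since $B$ is aspherical one has $\chi(\beta)=\chi^{orb}(B)$, so the hypothesis forces $\chi^{orb}(B)=0$; thus $B$ is one of the $17$ flat $2$-orbifolds coming from the wallpaper groups. By Theorem 6, $\beta/\beta'\cong\pi/\pi'\phi$ is finite cyclic, so it remains to determine which of these $17$ orbifolds have finite cyclic orbifold fundamental group abelianization, and to observe that the answer is precisely the three listed.

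For the closed orientable ones, the standard cone-point presentations give $\mathbb{Z}/6$ for $S^2(2,3,6)$, $(\mathbb{Z}/3)^2$ for $S^2(3,3,3)$, $\mathbb{Z}/2\oplus\mathbb{Z}/4$ for $S^2(2,4,4)$, $(\mathbb{Z}/2)^3$ for $S^2(2,2,2,2)$ and $\mathbb{Z}^2$ for the torus; the non-orientable cases without mirror give $\mathbb{Z}\oplus\mathbb{Z}/2$ for $Kb$ and $\mathbb{Z}/2\oplus\mathbb{Z}/4$ for $\mathbb{RP}^2(2,2)$. Of these, only $S^2(2,3,6)$ passes. For the pure-mirror orbifolds $\mathbb{D}^2(;m_1,\ldots,m_k)$, the Coxeter-type relations $r_i^2=(r_ir_{i+1})^{m_i}=1$ abelianize to $(\mathbb{Z}/2)^k$ modulo the identifications $r_i=r_{i+1}$ forced by each odd $m_i$, so cyclicity requires every $m_i$ odd; in the Euclidean range this picks out only $\ast333=\mathbb{D}^2(\overline{3},\overline{3},\overline{3})$, giving $\mathbb{Z}/2$. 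The two annular orbifolds $\ast\ast$ and $\ast\times$ both contain $\mathbb{Z}$-summands and are excluded.

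The four remaining mixed orbifolds $22\ast$, $2\ast22$, $4\ast2$, $3\ast3$ I would handle via the orientation double cover along the mirror, which doubles each interior cone to a pair of cones and each corner to a single cone of the same order, yielding a closed orientable flat orbifold already treated. Writing $\beta$ as the split $\mathbb{Z}/2$-extension of the double's group by a reflection $r$ and abelianizing---adjoining $r$ with $2r=0$ and imposing the identifications coming from the $r$-action on the cone generators, including the sign from orientation-reversal---one finds $(\mathbb{Z}/2)^3$ for $22\ast$ and $2\ast22$, $\mathbb{Z}/4\oplus\mathbb{Z}/2$ for $4\ast2$, and $\mathbb{Z}/6$ for $3\ast3=\mathbb{D}^2(3,\overline{3})$ (here $r$ makes the two interior $3$-cone generators negatives of each other, the sphere relation then forces the third $3$-generator to be annihilated by both $2$ and $3$, and what remains is $\mathbb{Z}/3\oplus\mathbb{Z}/2$). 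The three successful signatures are exactly those in the statement. The only real obstacle is careful case-by-case bookkeeping across the $17$ signatures; the individual abelianization computations are entirely routine.
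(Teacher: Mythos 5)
Your argument is exactly the paper's (the paper simply asserts that these are the flat $2$-orbifolds with $\beta/\beta'$ cyclic, leaving the seventeen-case check implicit), and your case-by-case abelianization computations are correct, identifying $632$, $\ast 333$ and $3\ast 3$ as the only signatures with finite cyclic abelianization. The only blemish is the claim that cyclicity of a pure-mirror orbifold group's abelianization ``requires every $m_i$ odd'' --- what is really needed is that the odd corner labels connect all the reflection generators --- but this does not affect the outcome for the Euclidean signatures.
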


\begin{proof}
These are the flat 2-orbifolds for which $\beta/\beta'$ is cyclic.
\end{proof}

\begin{cor}
If $\chi(\phi)=0$ then $\phi$
is the unique maximal normal $PD_2^+$-subgroup of $\pi$.
\end{cor}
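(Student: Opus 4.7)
Since $\chi(\phi)=0$ and $\phi$ is a $PD_2^+$-group, $\phi\cong Z^2$ and the fibre $F$ is a $2$-torus; both $B$ and $F$ are aspherical, so $M(K)$ is aspherical and $\pi$ is a $PD_4$-group. Let $\psi\trianglelefteq\pi$ be a normal $PD_2^+$-subgroup; I aim to show $\psi\leq\phi$. The intersection $\phi\cap\psi$ is normal in $\pi$ and is an abelian normal subgroup of $\psi$ (being a subgroup of $\phi\cong Z^2$), so either $\psi\cong Z^2$ or $\phi\cap\psi=1$, since a closed orientable surface group of genus $\geq 2$ has no nontrivial abelian normal subgroup.

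If $\phi\leq\psi$, then $\psi\cong Z^2$ (as a surface group of higher genus cannot contain $Z^2$), and the finite quotient $\psi/\phi$ embeds in $\beta=\pi/\phi$ as a finite normal subgroup; since an aspherical $2$-orbifold group has no nontrivial finite normal subgroups, $\psi=\phi$. Assume henceforth $\phi\not\leq\psi$. If $\phi\cap\psi=1$, then $[\phi,\psi]=1$ and $\phi\psi\cong\phi\times\psi$ is a normal subgroup of $\pi$ of cohomological dimension $4$, hence of finite index in the $PD_4$-group $\pi$; thus $\pi$ would be virtually $Z^2\times\psi$. I would rule this out by combining $\pi/\pi'\cong Z$, weight $1$, $\phi\not\leq\pi'$ and $\zeta\pi\leq\phi$ (Theorem~4): the forced finite quotient action on $H_1(\phi\times\psi;\mathbb{Z})\cong Z^{2+2g}$ cannot simultaneously satisfy all of these constraints.

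It remains to handle $\psi\cong Z^2$ with $\phi\cap\psi\cong Z$ and $\psi\not\leq\phi$. Then $\phi\psi$ is a virtually abelian normal subgroup of $\pi$ of Hirsch length $3$; by the duality for normal subgroups of a $PD$-group, $\pi/\phi\psi$ is virtually $Z$, so $\pi$ has Hirsch length $4$ and is virtually $Z^4$. The image $\psi\phi/\phi\cong Z$ is a nontrivial finitely generated normal abelian subgroup of $\beta$, forcing $\chi^{orb}(B)=0$; thus $B$ is one of the three flat orbifolds of Corollary~1. The proof then reduces to a direct computation of the rank-$2$ sublattices of $\pi'$ invariant under the meridianal automorphism, using $\zeta\pi\leq\phi$ and $\beta/\beta'$ finite cyclic (Theorem~4) together with the explicit presentations of $S^2(2,3,6)$, $\mathbb{D}^2(\overline{3},\overline{3},\overline{3})$ and $\mathbb{D}^2(3,\overline{3})$ to pin down $\phi$ as the only such sublattice compatible with the $2$-knot data. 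The main obstacle is this last flat-base case, together with the subsidiary argument above excluding $\pi$ virtually $Z^2\times\Sigma_g$.
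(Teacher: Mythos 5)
Your opening reduction is fine, and the easy subcases ($\phi\leq\psi$, and more generally $\phi\cap\psi$ of finite index in $\psi$, which you should state explicitly since it is not literally covered by ``$\phi\leq\psi$'') are handled correctly. But the two cases you yourself flag as ``the main obstacle'' are exactly where the content of the corollary lies, and you leave both as unexecuted sketches; moreover one sketch starts from a false claim. In the case $\psi\cong Z^2$ with $\phi\cap\psi\cong Z$ you assert that $\phi\psi$ is \emph{virtually abelian}. All that follows from normality is $[\phi,\psi]\leq\phi\cap\psi$, so $\phi\psi$ is only nilpotent of class $\leq2$, and it may be a discrete Heisenberg-type group of Hirsch length $3$ (this is precisely the situation inside $\pi'$ for the $\mathbb{N}il^3\times\mathbb{E}^1$ examples such as $\tau_63_1$). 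Hence neither ``$\pi$ is virtually $Z^4$'' nor the proposed ``computation of rank-$2$ sublattices of $\pi'$'' is justified ($\pi'$ need not be abelian, so the word ``sublattice'' does not even apply). The paper avoids this trap by first replacing the second subgroup with its intersection with $C_\pi(\phi)$, which has index $\leq2$ in $\pi$ by Theorem 16.2 of \cite{Hi}; the product $\phi(\tilde\phi\cap C_\pi(\phi))$ is then genuinely abelian of rank $\geq3$, which forces $\pi'\cong Z^3$ or $\pi$ virtually $Z^4$, and the classification in \S16.4 of \cite{Hi} rules these out. You need that centralizer step (or a substitute) and that citation; the case cannot be closed by elementary lattice bookkeeping alone.

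The remaining case, $\phi\cap\psi=1$ with $\psi$ a surface group, is also not proved: ``the forced finite quotient action on $H_1(\phi\times\psi;\mathbb{Z})$ cannot simultaneously satisfy all of these constraints'' is an assertion, not an argument, and it is not evident that the $H_1$-action alone yields a contradiction. The paper argues instead in the quotient $\tilde\beta=\pi/\tilde\phi$: since $\phi\times\tilde\phi$ has finite index in $\pi$, the image of $\phi$ has finite index in $\tilde\beta$, its centralizer there has index $\leq2$ (Theorem 16.2 of \cite{Hi} again), so the holonomy of the resulting flat group has order $\leq2$; but $\tilde\beta/\tilde\beta'$ is a quotient of $\pi/\pi'\cong Z$ and hence cyclic, and no flat $2$-orbifold group with holonomy of order $\leq2$ has cyclic abelianization. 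Note that this subsumes your $\psi\cong Z^2$, $\phi\cap\psi=1$ subcase as well. Until you either reproduce arguments of this kind or genuinely exhibit the claimed obstruction, the proof is incomplete. (Minor point: the facts $\zeta\pi\leq\phi$ and $\beta/\beta'$ finite cyclic come from Theorem 6, not Theorem 4.)
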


\begin{proof}
If $\tilde\phi$ is another maximal normal $PD_2^+$-subgroup of $\pi$
then $\phi\tilde\phi$ is a normal subgroup which properly contains
$\phi$ and so $[\phi\tilde\phi:\phi]=\infty$.

If $\chi(\tilde\phi)=0$ also then $\phi(\tilde\phi\cap{C_\pi(\phi))}$ 
is abelian, of rank at least 3.
But then either $\pi'\cong{Z^3}$ or $\pi$ is virtually $Z^4$,
and no such 2-knot group has an abelian normal subgroup of rank 2.
(See \S16.4 of \cite{Hi}.)

If $\chi(\tilde\phi)\not=0$ then $\phi\cap\tilde\phi=1$,
and the image of $\phi$ in $\tilde\beta=\pi/\tilde\phi$
has finite index and centralizer of index $\leq2$,
since $[\pi:C_\pi(\phi)]\leq2$, by Theorem 16.2 of \cite{Hi}.
But then the holonomy of $\tilde\beta$ has order at most 2,
contrary to $\beta/\beta'$ being cyclic.
\end{proof}

If $K$ is a $2$-knot such that $\pi{K}$ has an abelian normal subgroup
of rank $2$ then $M(K)$ is $s$-cobordant to a 
${\widetilde{\mathbb{SL}}\times\mathbb{E}^1}$-manifold or is
homeomorphic to one of the $\mathbb{N}il^3\times\mathbb{E}^1$-manifolds
$M(\tau_63_1)$ or $M(\tau_2k(e,\eta))$, 
for some even $e$ and $\eta=\pm1$.
The manifolds $M(\tau_63_1)$ and $M(\tau_2k(e,\eta))$ are 
Seifert fibred over $S^2(2,3,6)$ and 
$\mathbb{D}^2(\overline{3},\overline{3},\overline{3})$,
respectively.
Consideration of the possible knot groups shows that no 2-knot manifold is
Seifert fibred over $\mathbb{D}^2(3,\overline{3})$.
(See Chapter 16 of \cite{Hi}.)

At present, no such examples with $\chi(\phi)\not=0$ have been found.
What little we know is summarized in the following theorem.

\begin{theorem}
If $K$ is a $2$-knot such that $\pi=\pi{K}$ is an extension of a
flat $2$-orbifold group $\beta$ by a normal subgroup $\phi$ which is
a $PD_2^+$-group then $\chi(\phi)\equiv0$ {\it mod\/} $(6)$
and $\pi/\phi\cap\pi'$ is a $3$-dimensional crystallographic group.
The group $\pi$ has no non-trivial abelian normal subgroup.
\end{theorem}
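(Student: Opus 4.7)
My plan is to handle the three assertions in order, exploiting the orbifold bundle realisation of $M(K)$ provided by Theorem 6 and Corollary 1.

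For the divisibility $\chi(\phi)\equiv 0\pmod 6$, realise $M(K)$ as the total space of an orbifold fibration $F\to M(K)\to B$, where $B$ is one of $S^2(2,3,6)$, $\mathbb{D}^2(\bar 3,\bar 3,\bar 3)$ or $\mathbb{D}^2(3,\bar 3)$. At an interior cone point of order $n$ a neighbourhood is modelled on $(F\times D^2)/(\mathbb{Z}/n)$ with $\mathbb{Z}/n$ rotating the disc; for the total space to be a manifold, the action on $F\times\{0\}$ (the global fixed set of the disc rotation) must be free, so $\mathbb{Z}/n$ acts freely on $F$ and Riemann--Hurwitz gives $n\mid\chi(F)$. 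A parallel analysis at corner points of the non-orientable bases, where the isotropy is dihedral $D_n=\mathbb{Z}/n\rtimes\mathbb{Z}/2$, forces both the rotation subgroup and the order-$2$ reflection to act freely on $F$. Inspection of each base then yields $6\mid\chi(\phi)$.

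For the crystallographic quotient, note that $\phi\cap\pi'$ is normal in $\pi$ as the intersection of two normal subgroups, and Theorem 6 gives $\phi\not\leq\pi'$. Hence $\phi/(\phi\cap\pi')$ is an infinite cyclic subgroup of $\pi/\pi'\cong\mathbb{Z}$, and $\tilde\pi:=\pi/(\phi\cap\pi')$ sits in a central extension $1\to\mathbb{Z}\to\tilde\pi\to\beta\to 1$ (central because $[\pi,\phi]\subseteq\phi\cap\pi'$). The key observation is that the preimage $\tilde T$ of the translation lattice $T\cong\mathbb{Z}^2$ of $\beta$ is abelian: for $a,b\in\pi$ with images in $T$, the commutator $[a,b]$ lies in $\phi$ (since $T$ is abelian) and in $\pi'$, hence in $\phi\cap\pi'$. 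As $\mathrm{Ext}^1(\mathbb{Z}^2,\mathbb{Z})=0$, the abelian extension $1\to\mathbb{Z}\to\tilde T\to\mathbb{Z}^2\to 1$ splits, so $\tilde T\cong\mathbb{Z}^3$. The holonomy $\beta/T$ acts trivially on the central $\mathbb{Z}$ and faithfully on $T$, hence faithfully on $\tilde T$, and Zassenhaus's characterisation identifies $\tilde\pi$ as a $3$-dimensional crystallographic group.

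For the final assertion, suppose for contradiction that $A\lhd\pi$ is abelian and non-trivial. Since $\chi(\phi)\neq 0$, $\phi$ is a hyperbolic surface group with no non-trivial abelian normal subgroup, so $A\cap\phi=1$ and $A$ embeds as an abelian normal subgroup of $\beta$, hence into the translation lattice $T$. Torsion-freeness of $\pi$ gives $A\cong\mathbb{Z}^k$ with $k\in\{1,2\}$, and $[A,\phi]\leq A\cap\phi=1$ shows $A$ centralises $\phi$. If $k=2$, then $A\times\phi$ has finite index in $\pi$, so $\pi$ is commensurable with an $\mathbb{H}^2\times\mathbb{E}^2$ lattice; but Theorem 16.2 of \cite{Hi} lists the geometries of $2$-knot manifolds with a rank-$2$ abelian normal subgroup and $\mathbb{H}^2\times\mathbb{E}^2$ is not among them, giving a contradiction. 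If $k=1$, then $\pi/A$ is an extension of the virtually cyclic $\beta/A$ by $\phi$, placing $\pi$ in the $2$-knot regime with infinite cyclic normal subgroup treated in Chapter 16 of \cite{Hi}, where the $PD_2^+$ normal subgroup must be $\mathbb{Z}^2$---again contradicting $\chi(\phi)\neq 0$. The \emph{main obstacle} is the local analysis of the dihedral isotropy at corner points of the non-orientable base orbifolds in Part 1, and the $k=1$ subcase of Part 3, which requires a careful appeal to Hillman's classification of $2$-knot groups containing an infinite cyclic normal subgroup; the crystallographic step is comparatively clean once the commutator observation is made.
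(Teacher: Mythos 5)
Your treatment of the crystallographic quotient is essentially the paper's argument with the details filled in: the paper likewise passes to $\pi/(\phi\cap\pi')$, notes that the preimage of the translation lattice is $\mathbb{Z}^3$ and that there is no nontrivial finite normal subgroup; your commutator computations, the appeal to $\mathrm{Ext}^1(\mathbb{Z}^2,\mathbb{Z})=0$, and the Zassenhaus criterion are exactly the missing details, and this part is fine. For the divisibility, however, you take a genuinely different and more roundabout route. The paper's argument is purely algebraic: $M(K)$ is aspherical (by the finitely generated normal subgroup results quoted from \cite{Hi08}), so $\pi$ is torsion-free, and the preimage in $\pi$ of a maximal finite subgroup of $\beta$ (of order $6$ in each of the three flat bases) is then a torsion-free extension of $\phi$ of index $6$, hence itself a $PD_2$-group, giving $6\mid\chi(\phi)$ at once (and $12$ in the orientable case). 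Your Riemann--Hurwitz analysis at cone and corner points is a geometric shadow of this, but it rests on an undischarged obligation: the hypothesis of the theorem only gives the group extension, not a realisation of $M(K)$ as an orbifold bundle total space. To invoke that realisation you must verify the hypotheses of the existence theorem (in particular that $\beta$ acts on $H_2(\phi;\mathbb{Z})$ through $w_1(\beta)$), and even then you obtain \emph{some} knot manifold with group $\pi$, not necessarily $M(K)$. All of this can be repaired ($\chi(\phi)$ is an invariant of the group, and the orientation condition follows from $w_1(\pi)=0$), but you should either do so or note that freeness of the isotropy action on the fibre is precisely torsion-freeness of the preimage, which is available without any geometry.

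In the last part there is one concrete soft spot. Your case $k=1$ is vacuous, but for a reason you do not supply, and the argument you give for it (an appeal to the classification of $2$-knot groups with an infinite cyclic normal subgroup, with the unsubstantiated claim that the normal $PD_2^+$-subgroup "must be $\mathbb{Z}^2$") would not survive scrutiny on its own. The correct observation, which the paper uses implicitly, is that since $A$ is normal in $\pi$ and $A\cap\phi=1$, the image $A\phi/\phi\cong A$ is a nontrivial abelian \emph{normal} subgroup of $\beta$; in each of the three flat $2$-orbifold groups the holonomy ($\mathbb{Z}/6\mathbb{Z}$ or $S_3$) acts rationally irreducibly on the translation lattice, so any such subgroup has rank $2$. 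Thus only $k=2$ occurs, and your argument there (ruling out an $\mathbb{H}^2\times\mathbb{E}^2$-type group via the list of geometries of $2$-knot manifolds with a rank-$2$ abelian normal subgroup in \cite{Hi}) matches the paper's citation of \S 16.4 of \cite{Hi}. Note finally that, like the paper, the third assertion only makes sense under the standing assumption $\chi(\phi)\neq0$ (otherwise $\phi\cong\mathbb{Z}^2$ is itself an abelian normal subgroup, as for $\tau_63_1$); you state this explicitly, which is to your credit.
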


\begin{proof}
The torsion subgroup of $\beta$ is isomorphic to
$Z/6Z$ or $S_3$, by the Corollary to Theorem 6.
The preimage in $\pi$ of this torsion subgroup 
is a torsion-free extension of $\phi$,
and so $\chi(\phi)\equiv0$ {\it mod\/} (12),
if $\beta$ is orientable,
and $\chi(\phi)\equiv0$ {\it mod\/} (6) otherwise.

Since $\beta/\beta'$ is finite $\phi/\phi\cap\pi'\cong{Z}$,
and so $\phi\cap\pi'$ is free of countable rank.
The preimage in $\pi/\phi\cap\pi'$ of the translation subgroup of $\beta$
is isomorphic to $Z^3$, 
and $\pi/\phi\cap\pi'$ has no non-trivial finite normal subgroup.
Thus $\pi/\phi\cap\pi'$ is a 3-dimensional crystallographic group.

If $A$ is an abelian normal subgroup of $\pi$ then $A\cap\phi=1$,
and so $A$ maps injectively to an abelian normal subgroup of $\beta$. 
Therefore if $A$ is non-trivial then $A\cong{Z^2}$.
But this is impossible, by the second corollary of Theorem 7.
\end{proof}

\begin{cor}
The knot manifold $M(K)$ is not Seifert fibred, and $K$ is not a twist spin.
\qed
\end{cor}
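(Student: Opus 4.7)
My plan is to derive both assertions from the final clause of Theorem 8: that $\pi$ has no non-trivial abelian normal subgroup. Both hypotheses in the corollary --- $M(K)$ being Seifert fibred, and $K$ being a twist spin --- force $\pi$ to contain such a subgroup, yielding a contradiction in each case.

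For the first assertion, I would note that $M(K)$ is a closed aspherical $4$-manifold with $\chi(M(K))=0$, so any Seifert fibration realizes it as the total space of an orbifold bundle whose generic fibre is a torus $T^k$ for some $k\in\{1,2,3\}$, over a base orbifold of complementary dimension. In each case the fundamental group of the generic fibre is free abelian of rank $k$ and injects as a non-trivial normal subgroup of $\pi$ (asphericity of the total space gives injectivity, cf.\ Chapter 7 of \cite{Hi}). This directly contradicts Theorem 8.

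For the second assertion, suppose $K=\tau_r k$ for some classical knot $k$. The boundary cases $r\leq 1$ are dispensed with by inspection: the resulting $2$-knot is either the ordinary spin $\sigma k$ or the unknot, and for neither does $\pi$ contain a normal $PD_2^+$-subgroup $\phi$ whose quotient is a flat $2$-orbifold group. For $r\geq 2$, the knot manifold $M(\tau_r k)$ fibres over $S^1$ with fibre the once-punctured $r$-fold cyclic branched cover of $(S^3,k)$ and monodromy of order exactly $r$. Consequently, if $t\in\pi$ is a meridian, then $t^r$ acts trivially by conjugation on the fibre group $\pi'$ and commutes with $t$; since $\pi=\pi'\cdot\langle t\rangle$, it follows that $t^r\in\zeta\pi$. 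Thus $\langle t^r\rangle$ is a non-trivial cyclic normal subgroup of $\pi$, again contradicting Theorem 8.

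The only genuinely non-trivial input is the structural description of Seifert-fibred aspherical $4$-manifolds needed in the first step, which produces the free abelian normal fibre subgroup; the twist-spin case follows immediately from the standard fact that the $r$-th power of the meridian of an $r$-twist spin is central.
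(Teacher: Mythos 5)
Your proof is correct and is exactly the argument the paper intends: the corollary carries no written proof because both claims are meant to follow immediately from the final clause of Theorem 8 (no non-trivial abelian normal subgroup), via the normal abelian fibre subgroup of a Seifert fibration and the centrality of $t^r$ for an $r$-twist spin.
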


In particular, if $B=S^2(2,3,6)$ then ${\pi/\phi\cap\pi'\cong{G_5}}$, 
the orientable flat 3-manifold group with holonomy $Z/6Z$.
Are there any examples with $\pi'$ finitely generated (but not solvable)?
If so $\pi'$ would be a $PD_3$-group with free commutator subgroup.
Must such a group be a semidirect product $H\rtimes_\theta{Z}$
with $H$ a $PD_2$-group?
If $H$ is hyperbolic, $\theta$ must have infinite order in $Out(H)$.

Are there any examples with base $B=\mathbb{D}^2(3,\overline{3})$
and hyperbolic fibre?

In case (3)
are there examples with both $H$ and $\pi/H$ having infinitely many ends?
In particular, are then any such with $H\cong{F(r)}$ for some $r>1$?

\newpage


\begin{thebibliography}{99}

\bibitem{Bi} Bieri, R. {\it Homological Dimensions of Groups},

Queen Mary College Mathematics Notes, London (1976).

\bibitem{Bow} Bowditch, B.H. Planar groups and the Seifert conjecture,

J. Reine Angew. Math. 576 (2004), 11--62.

\bibitem {CS} Cappell, S.E. and Shaneson, J.L. There exist 
inequivalent knots with the 

same complement,
Ann. Math. 103 (1976), 349--353.

\bibitem{Cr} Crisp, J.S. The decomposition of Poincar\'e duality complexes,

Comment. Math. Helv. 75 (2000), 232--246.

\bibitem{FJ} Farrell, F.T. and Jones, L.P. Isomorphism Conjectures in algebraic
$K$-theory,

J.Amer. Math. Soc. 9 (1993), 249--295.

\bibitem{DF} Fried, D. and Lee, R. Realizing group automorphisms,

in {\it Group actions on manifolds (Boulder, Colo., 1983)},
CONM 36, 

Amer. Math. Soc., Providence, RI, (1985), 427--432.

\bibitem{Gi} Gilbert, N.D. Presentations of the automorphism group of a free
product,

Proc. London Math. Soc. 54 (1987), 115--140.

\bibitem{GGS} Gonz\'alez-Acu\~na, F., Gordon, C. McA. and Simon, J.
Unsolvable problems about higher-dimensional knots and related groups,
arXiv.math.GR 0908.4009.


\bibitem{HL} Hendriks, H. and Laudenbach, F. Scindement d'une \'equivalence 
d'homotopie en dimension 3, 
Ann. Sci. Ecole Norm. Sup. 7 (1974), 203--217.

\bibitem{Hi} Hillman, J.A. {\it Four-Manifolds, Geometries and Knots},

Geometry and Topology Monographs, vol. 5,

Geometry and Topology Publications (2002).

\bibitem{Hi08}  Hillman, J.A. 
Finitely dominated covering spaces of $3$- and $4$-manifolds,

J. Austral. Math. Soc. 84 (2008), 99--108.

\bibitem{Hi10}  Hillman, J.A. Indecomposable $PD_3$-complexes,

arXiv.math.GT 0808.1775.

\bibitem{HK} Hillman, J.A. and Kochloukova, D.H.  Finiteness conditions 
and $PD_r$-group covers of $PD_n$-complexes,
Math. Z. 256 (2007), 45--56.

\bibitem{Ka83} Kanenobu, T. Groups of higher dimensional satellite knots,

J.Pure Appl. Alg. 28 (1983), 179--188.

\bibitem{Ko} Kochloukova, D. H.  On a conjecture of E.Rapaport Strasser about
knot-like groups and its pro-$p$ version,
J. Pure App. Algebra 204 (2006), 536--554.

\bibitem{La} Laudenbach, F. {\it Topologie de la Dimension Trois: Homotopie et Isotopie},

Ast\'erisque 12 (1974).

\bibitem{Re} Reid, A. W. A non-Haken hyperbolic $3$-manifold covered by a surface bundle,

Pacific J. Math. 167 (1995), no. 1, 163--182.

\bibitem{Si} Silver, D. Examples of 3-knots with no minimal Seifert manifolds,

Math. Proc. Cambridge Philos. Soc. 110 (1991), 417--420.

\bibitem{Sw} Swarup, G.A. On a theorem of C.B.Thomas,

J. London Math Soc. 8 (1974), 13--21.

\end{thebibliography}
\end{document}